\documentclass{article}
\usepackage[utf8]{inputenc}

\title{On cancellative pairs of families of subsets}

\usepackage{amsmath, amssymb, amsthm, url, tikz, verbatim, tkz-graph}
\usepackage{blkarray}
\usetikzlibrary{arrows.meta}

\newtheorem{theorem}{Theorem}[section]

\newtheorem{lemma}[theorem]{Lemma}

\newtheorem{corollary}[theorem]{Corollary}

\newtheorem{question}[theorem]{Question}

\newtheorem{conjecture}[theorem]{Conjecture}

\usetikzlibrary{calc}
\usetikzlibrary{patterns}
\usetikzlibrary{decorations.markings}
\usetikzlibrary{arrows,shapes.geometric,%
	decorations.pathreplacing,shapes,shadows}

\date{}

\author{ Yijia Fang\thanks{Department of Mathematics, National University of Singapore. Email: fangyijia@u.nus.edu.}
\and
Hao Huang\thanks{Department of Mathematics, National University of Singapore. Email: huanghao@nus.edu.sg. Research supported in part by a start-up grant at NUS and an MOE Academic Research Fund (AcRF) Tier 1 grant A-8003627.}}

\begin{document}

\maketitle

\begin{abstract}
A pair $(\mathcal{A}, \mathcal{B})$ of families of subsets of $[n]$ is cancellative if whenever $A, A' \in \mathcal{A}, B \in \mathcal{B}$ satisfy $A \cup B=A' \cup B$, then $A=A'$, and whenever $A \in \mathcal{A}, B, B' \in \mathcal{B}$ satisfy $A \cup B=A \cup B'$, then $B=B'$. We show that for every cancellative pair $(\mathcal{A}, \mathcal{B})$, the inequality $|\mathcal{A}||\mathcal{B}| \le 2.25^n$ holds, matching Tolhuizen's $(2.25-o(1))^n$ lower bound construction.

\end{abstract}

\section{Introduction}
The notion of a cancellative pair was introduced by Holzman and K\"orner \cite{HK95}: a pair $(\mathcal{A}, \mathcal{B})$ of families of subsets of $[n]$ is \textit{cancellative} if whenever $A, A' \in \mathcal{A}, B \in \mathcal{B}$ satisfy $A \cup B=A' \cup B$, then $A=A'$, and whenever $A \in \mathcal{A}, B, B' \in \mathcal{B}$ satisfy $A \cup B=A \cup B'$, then $B=B'$. Equivalently, $A \setminus B=A' \setminus B$ implies $A=A'$, and $B \setminus A=B' \setminus A$ implies $B=B'$.

A closely related and stronger concept is the \textit{recovering} pair: $A \setminus B=A'\setminus B'$ implies $A=A'$, $B \setminus A=B' \setminus A'$ implies $B=B'$.  For example, let $\mathcal{A}$ consist of all subsets of $\{1, \cdots, k\}$, and $\mathcal{B}$ consists of all subsets of $\{k+1, \cdots, n\}$. This gives a recovering  pair since $A \setminus B=A$, $B \setminus A=B$. This pair has $|\mathcal{A}||\mathcal{B}|=2^n$. Ahlswede and Simonyi \cite{AS94} conjectured $|\mathcal{A}||\mathcal{B}| \le 2^n$ for every recovering pair $(\mathcal{A}, \mathcal{B})$. A recovering pair is always cancellative but the converse does not hold. On the other hand, when $n=3$, it is not hard to verify that $\mathcal{A}=\mathcal{B}=\{\{1\},\{2\},\{3\}\}$ provides a cancellative but not recovering pair with $|\mathcal{A}||\mathcal{B}|=9>8=2^3$. Taking products of this example gives cancellative pairs $(\mathcal{A}, \mathcal{B})$ with $|\mathcal{A}||\mathcal{B}|=3^{2n/3} \approx 2.08^n$ when $n$ is divisible by $3$. Erd\H os and Katona \cite{Katona74} conjectured that in the symmetric case when $\mathcal{A}=\mathcal{B}$, this is the best construction. This was disproved by Shearer \cite{Shearer96} by constructing symmetric cancellative pairs with $|\mathcal{A}||\mathcal{B}| \approx 2.138^n$. Tolhuizen \cite{Tol2000} found an elegant construction that shows there exist symmetric cancellative pairs with $|\mathcal{A}||\mathcal{B}| = (2.25-o(1))^n$. An earlier result of Frankl and F\"uredi \cite{FF84} showed that this constant is best possible in the symmetric case when $\mathcal{A}=\mathcal{B}$.

For the non-symmetric cancellative pairs, Holzman and K\"orner \cite{HK95} showed in 1995 that $|\mathcal{A}||\mathcal{B}| \le 2.3264^n$. Little progress was made over the following two decades, until Janzer \cite{Janzer18} improved the bound to $2.2682^n$ in 2018. Both bounds also apply to recovering pairs. For recovering pairs specifically, Solt\'esz \cite{Soltesz18} improved the Holzman--K\"orner bound to $2.284^n$. This was subsequently improved to $2.2663^n$ by  Nair and Yazdanpanah \cite{NY20}, and further to $2.2543^n$ by Mond, Souza, and Versteegen \cite{MSV25} using a connection with the constant arising in the study of cancellative pairs. To summarize, if we denote by $\mu_{can}$ and $\mu_{rec}$ the optimal growth rate for cancellative and recovering pairs respectively, formally,
$$\mu_{\mathrm{can}}=\lim_{n \rightarrow \infty} \left(\max_{(\mathcal{A}, \mathcal{B}) \textup{~cancellative~on~} [n]} |\mathcal{A}||\mathcal{B}|\right)^{1/n},$$
$$\mu_{\mathrm{rec}}=\lim_{n \rightarrow \infty} \left(\max_{(\mathcal{A}, \mathcal{B}) \textup{~recovering~on~} [n]} |\mathcal{A}||\mathcal{B}|\right)^{1/n},$$
then the state of the art is 
$$2.25 \le \mu_{can} \le 2.2682, \qquad 2 \le \mu_{rec} \le 2.2543.$$

In this note we use the entropy method to prove the following result, thereby settling the cancellative pair problem:

\begin{theorem}\label{thm_main}
If $(\mathcal{A}, \mathcal{B})$ forms a cancellative pair of families of subsets of $[n]$, then 
$$|\mathcal{A}||\mathcal{B}| \le 2.25^n.$$
\end{theorem}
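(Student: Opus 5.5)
Let $A$ be uniform on $\mathcal{A}$ and $B$ uniform on $\mathcal{B}$, independent, so that $\log_2|\mathcal{A}||\mathcal{B}| = H(A)+H(B)$. Write $A=(A_1,\dots,A_n)$ and $B=(B_1,\dots,B_n)$ as indicator vectors. The plan is to bound $H(A)+H(B)$ by a sum of $n$ local terms, each at most $\log_2 2.25 = 2\log_2 3 - 2$.

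\textbf{Step 1: encode the cancellation property in entropy.} The first condition says that, given $B$, the set $A\setminus B$ determines $A$. Hence $H(A) = H(A\setminus B \mid B)$, and symmetrically $H(B) = H(B\setminus A\mid A)$. Both quantities are functions of the pair process $(A_i,B_i)_i$ with the coordinates revealed in order.

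\textbf{Step 2: chain rule coordinatewise.} Expanding gives
$$H(A\setminus B\mid B)\le \sum_i H\bigl(A_i(1-B_i)\mid B, A_{<i}\bigr) \quad\text{(or conditioning only on } (A\setminus B)_{<i}\text{)}.$$
To exploit independence of $A$ and $B$, I would rather write $H(A)=\sum_i H(A_i\mid A_{<i})$ and compare each term with $H(A_i(1-B_i)\mid B,(A\setminus B)_{<i})$. A cleaner route is to combine the two identities into a single sum
$$H(A)+H(B)\le \sum_i \Bigl[\alpha\, H(A_i\mid A_{<i}) + \alpha\, H(B_i\mid B_{<i}) + (1-\alpha)\bigl(H(A_i\setminus B_i\mid\cdot)+H(B_i\setminus A_i\mid\cdot)\bigr)\Bigr]$$
with suitable conditioning. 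Then, for each $i$, introduce $p_i=\Pr[A_i=1\mid\text{past}]$ and $q_i=\Pr[B_i=1\mid\text{past}]$. These are independent given the respective pasts.

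\textbf{Step 3: the single-coordinate inequality.} Using $H(A_i(1-B_i)\mid \dots)\le \mathbb{E}\,h(p_i(1-q_i))$-type bounds, the coordinatewise contribution reduces to maximizing an explicit function $F(p,q)$ built from binary entropies $h(p)$, $h(q)$, $h(p(1-q))$ and $h(q(1-p))$. We need $F\le \log_2 2.25$, with equality at Tolhuizen's point (presumably $p=q=1/3$, since $h$ terms give $\log_2 3 - 2/3\cdot$ factors). A useful combined bound: since the two conditions hold simultaneously, $2(H(A)+H(B))\le H(A)+H(B)+H(A\setminus B\mid B)+H(B\setminus A\mid A)$. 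It is then natural to bound $H(A\setminus B\mid B)\le \sum_i \mathbb{E}[(1-q_i)\,h(p_i)]$, where the factor $(1-q_i)$ arises because when $B_i=1$ the coordinate carries no information. Symmetrically $H(B)\le\sum_i \mathbb{E}[(1-p_i)h(q_i)]$. Adding these, with the caveat that $p_i,q_i$ are random, gives
$$H(A)+H(B)\le\sum_i \mathbb{E}\bigl[(1-q_i)h(p_i)+(1-p_i)h(q_i)\bigr],$$
and one checks $\max_{p,q}\,(1-q)h(p)+(1-p)h(q)=\log_2 2.25$, attained at $p=q=1/3$: $2\cdot\tfrac23 h(1/3)=\tfrac43(\log_2 3-\tfrac23)=\log_2 2.25$. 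The calculus verification is routine.

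\textbf{Main obstacle.} The hard part is justifying the conditional bound $H(A\mid B)\le\sum_i \mathbb{E}[(1-q_i)h(p_i)]$ with a conditioning that makes $p_i$ and $q_i$ the right, independent quantities. Revealing $A_i$ only where $B_i=0$ loses the past of $A$ on coordinates where $B_j=1$, so $p_i$ must be conditioned on $(A\setminus B)_{<i}$ and $B$, not on $A_{<i}$. I would first try to handle this by averaging over a uniformly random ordering of coordinates (Shearer-style) or by concavity arguments. Should that fail, I would introduce auxiliary two-parameter weights and apply the cancellative conditions with a fresh independent copy $B'$, aiming to decouple the conditionings.
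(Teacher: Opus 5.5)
Your argument fails at the final optimization, and this is fatal even if the conditioning obstacle you flag were resolved. The identity you check is false: $\frac43 h(\frac13)=\frac43\log_2 3-\frac89\approx 1.224$, whereas $\log_2 2.25=2\log_2 3-2\approx 1.170$. The point $(\frac13,\frac13)$ is not even a critical point of $F(p,q)=(1-q)h(p)+(1-p)h(q)$, since $\partial_pF(\frac13,\frac13)=\frac23-h(\frac13)<0$. In fact $F(0.3,0.3)=1.4\,h(0.3)\approx 1.234$. So even a full proof of $H(A)+H(B)\le\sum_i\mathbb{E}[F(p_i,q_i)]$ would give $|\mathcal A||\mathcal B|\le 2^{n\max F}$ with $\max F\ge 1.234$. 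That is no better than about $2.35^n$, which is weaker than the Holzman--K\"orner bound $2.3264^n$.

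The intermediate inequality is also false with your definitions $p_i=\Pr[A_i=1\mid A_{<i}]$ and $q_i=\Pr[B_i=1\mid B_{<i}]$. By the chain rule $\sum_i\mathbb{E}[h(p_i)]=H(A)$, and $p_i,q_i$ are independent. Hence $H(A)\le\sum_i\mathbb{E}[(1-q_i)h(p_i)]$ is equivalent to $\sum_i\Pr[B_i=1]\,H(A_i\mid A_{<i})\le 0$, which already fails for $\mathcal A=\mathcal B=\{\{1\},\{2\},\{3\}\}$. The valid bound conditions on $B$ and $(A\setminus B)_{<i}$. That conditional probability differs from the weight $1-p_i$ appearing in your bound for $H(B)$, so the two estimates cannot be merged into a pointwise function of $(p_i,q_i)$.

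The missing idea is that making $A$ and $B$ independent is the wrong coupling. The paper keeps uniform marginals but correlates them: Sinkhorn scaling gives $\Pr[X=A,Y=B]=r_As_B3^{-|A\cup B|}$ with $X,Y$ uniform on $\mathcal A,\mathcal B$. It then introduces an auxiliary set $M\supseteq U=X\cup Y$, adding each element outside $U$ independently with probability $\frac13$. The joint law $\Pr[M=C,X=A,Y=B]=r_As_B3^{-n}2^{n-|C|}1_{A\subseteq C}1_{B\subseteq C}$ factors, so $X$ and $Y$ are conditionally independent given $M$, and $H[X,M]+H[Y,M]=H[X,Y,M]+H[M]$. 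This identity makes $H[X]+H[Y]$ exactly equal to the sum of $H[M]+H[Y\mid X,M]-H[M\mid X]$ and its mirror image. Cancellativity replaces $H[Y\mid X,M]$ by $H[U\mid X,M]$. Each of the two terms is then at most $n\log_2\frac32$, coordinate by coordinate, via a two-variable lemma for the Markov chain $R\to U_i\to M_i$. Here $R=(X,M_1,\dots,M_{i-1})$, $\Pr[M_i=1\mid U_i=1]=1$ and $\Pr[M_i=1\mid U_i=0]=\frac13$. Nothing in your proposal plays the role of $M$ or of the correlated coupling, and without them the local quantity you optimize is too large.
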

This upper bound, combined with Tolhuizen's $(2.25-o(1))^n$ construction, shows $\mu_{can}=2.25$. Applying Theorem \ref{thm_main} to the symmetric case \(\mathcal A=\mathcal B\) recovers the exact exponential bound \(|\mathcal A|\le(3/2)^n\). This bound also follows from the earlier Frankl--F\"uredi estimate  $|\mathcal{A}| \le n \cdot 1.5^n$ by a tensor-product argument.
Using a connection $\mu_{rec} \le \max\{2.2499, 2.222^{0.27} \cdot \mu_{can}^{0.73}\}$ established by Mond, Souza, and Versteegen \cite{MSV25}, we obtain $\mu_{rec} < 2.25=\mu_{can}$, creating a positive separation between these two optimal growth rates, and breaking the $2.25$ barrier for $\mu_{rec}$ for the first time.

\section{The proof}
Throughout this paper, we use standard notations of entropy from $\cite{AS92, Galvin14}$: given a random variable $X$, we denote by $H[X]$ its Shannon entropy $H[X]=-\sum_x \mathbb{P}[X=x]\log_2 \mathbb{P}[X=x]$. We also define conditional entropy $H[X|Y]$ in the standard way. Denote by $h(x)$ the function $-x\log_2(x)-(1-x)\log_2(1-x)$, which is the Shannon entropy of the Bernoulli distribution.

We start by stating the following theorem of Sinkhorn \cite{Sinkhorn64}. A special case of it says if $K$ is a square matrix with strictly positive elements, then there exist diagonal matrices $D_1$ and $D_2$ with strictly positive diagonal elements such that $D_1 K D_2$ is doubly stochastic. For general non-square matrices, the following holds:
\begin{lemma}[Sinkhorn's Theorem]
Given an arbitrary $m \times n$ matrix $K$ with strictly positive entries, one can find positive $r_1, \cdots, r_m$ and $s_1, \cdots, s_n$ such that the new matrix $K'_{ij}=K_{ij}r_is_j$ has all row sums equal to $\frac{1}{m}$ and all column sums equal to $\frac{1}{n}$.
\end{lemma}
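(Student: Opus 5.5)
The plan is to obtain the scalings $r_i, s_j$ as exponentials of a minimizer of a strictly convex-type potential, whose first-order conditions are exactly the desired row and column sum constraints. For $(x,y)\in\mathbb{R}^m\times\mathbb{R}^n$ define
$$f(x,y)=\sum_{i=1}^m\sum_{j=1}^n K_{ij}e^{x_i+y_j}-\frac1m\sum_{i=1}^m x_i-\frac1n\sum_{j=1}^n y_j .$$
Then $\partial f/\partial x_i=e^{x_i}\sum_j K_{ij}e^{y_j}-\frac1m$ and $\partial f/\partial y_j=e^{y_j}\sum_i K_{ij}e^{x_i}-\frac1n$. Consequently, at any critical point, setting $r_i=e^{x_i}>0$ and $s_j=e^{y_j}>0$ makes $K'_{ij}=K_{ij}r_is_j$ have row sums $\frac1m$ and column sums $\frac1n$. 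So it suffices to show that $f$ attains a minimum. Note that $f$ is convex and smooth.

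The key rewriting is to set $z_{ij}=x_i+y_j$. Since $\frac1{mn}\sum_{i,j}(x_i+y_j)=\frac1m\sum_i x_i+\frac1n\sum_j y_j$, we have
$$f(x,y)=\sum_{i,j} g_{ij}(z_{ij}),\qquad g_{ij}(z)=K_{ij}e^{z}-\frac{z}{mn}.$$
Because $K_{ij}>0$, each $g_{ij}$ is bounded below on $\mathbb{R}$ and tends to $+\infty$ as $|z|\to\infty$. It follows that $f$ is bounded below, and that $f\to\infty$ whenever $\max_{i,j}|z_{ij}|\to\infty$.

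It remains to handle the degeneracy of $f$ and deduce coercivity. The function $f$ is invariant under $(x,y)\mapsto(x+t\mathbf 1,\,y-t\mathbf 1)$; this is consistent with the sum condition, since the total mass $m\cdot\frac1m=n\cdot\frac1n=1$ balances. I would therefore restrict to the subspace $V=\{\sum_j y_j=0\}$, which meets every orbit of this shift. The linear map $T:V\to\mathbb{R}^{mn}$, $(x,y)\mapsto(z_{ij})$, is injective. Indeed, if $x_i+y_j=0$ for all $i,j$, then all $x_i$ are equal to some constant $c$ and all $y_j=-c$, and $\sum_j y_j=0$ forces $c=0$. Hence $\|T(x,y)\|_\infty\ge\alpha\|(x,y)\|$ for some $\alpha>0$, so $f|_V$ is coercive.

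A continuous coercive function on the finite-dimensional space $V$ attains its minimum. A minimizer of $f|_V$ is a global minimizer of $f$, by the shift invariance, hence a critical point of $f$ on all of $\mathbb{R}^{m+n}$. This finishes the proof.

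The only step needing care is this coercivity argument, and specifically the identification of the kernel of $(x,y)\mapsto(x_i+y_j)$. The strict positivity of all entries $K_{ij}$ is used exactly to make every $g_{ij}$ blow up in both directions. Without it, zero entries would break coercivity, and indeed Sinkhorn scaling can fail for such matrices.
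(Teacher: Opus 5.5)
Your proof is correct and complete. The identity $f=\sum_{i,j}g_{ij}(x_i+y_j)$, the shift invariance, injectivity of $(x,y)\mapsto(x_i+y_j)$ on $\{\sum_j y_j=0\}$, and coercivity together give a global minimizer. That minimizer is a critical point, which is exactly the required scaling. The shift invariance works precisely because the total masses $m\cdot\frac1m$ and $n\cdot\frac1n$ agree.

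Your route differs from the paper's in that the paper gives no proof of this lemma: it quotes Sinkhorn's 1964 theorem. That result concerns the square, doubly stochastic case, and the classical argument analyzes convergence of alternately normalizing rows and columns. The rectangular version with marginals $\frac1m,\frac1n$ then needs a small extra step or a later extension.

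Your variational proof has several advantages:
\begin{itemize}
\item It is short and self-contained.
\item It directly handles the rectangular $|\mathcal{A}|\times|\mathcal{B}|$ case that the paper actually uses. The same argument works for any positive target marginals with equal totals.
\item It shows exactly where strict positivity enters.
\item It gives uniqueness of the scaling up to $(r,s)\mapsto(tr,s/t)$ essentially for free. Each $g_{ij}$ is strictly convex and $T$ is injective on $V$, so $f|_V$ is strictly convex. The paper only needs existence, though.
\end{itemize}
In return, the iterative approach gives an explicit algorithm for computing $r$ and $s$, which is irrelevant to the entropy argument here.

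One minor point: your opening phrase ``strictly convex-type potential'' is loose. The function $f$ is not strictly convex on $\mathbb{R}^{m+n}$, because of the shift direction. This does not matter, since convexity is never used in your existence argument.
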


Here is another technical lemma we need for our entropy proof.

\begin{lemma}\label{lem_key}
Suppose $M$ and $U$ are random variables both taking value in $\{0, 1\}$, and they satisfy: 
$$\mathbb{P}[M=1|U=1]=1,~~~~\mathbb{P}[M=1|U=0]=\frac{1}{3}.$$ 
Suppose $R$ is an arbitrary finite random such that $R\to U \to M$ forms a Markov chain, that is,
$$
\mathbb{P}[M=m|U=u]=\mathbb{P}[M=m|U=u,R=r]
$$
for any $m,u,r$ with $\mathbb{P}[R=r]>0$. Then we always have $$H[M]-H[M|R]+H[U|R, M] \le \log_2 \frac{3}{2}.$$
\end{lemma}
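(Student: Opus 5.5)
We reduce everything to a one-variable calculus inequality. For each value $r$ with $\mathbb{P}[R=r]>0$, set $p_r=\mathbb{P}[U=1\mid R=r]$. By the Markov property, $\mathbb{P}[M=1\mid R=r]=p_r+\frac{1-p_r}{3}=\frac{1+2p_r}{3}=:q_r$, and $q_r$ ranges over $[\frac13,1]$.

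Next we compute $H[U\mid R=r,M]$. Given $M=0$, the variable $U$ is forced to be $0$, since $\mathbb{P}[M=1\mid U=1]=1$. Given $M=1$ (and $R=r$), Bayes and the Markov property give $\mathbb{P}[U=1\mid R=r,M=1]=p_r/q_r=\frac{3q_r-1}{2q_r}$. Hence
$$H[U\mid R=r,M]=q_r\,h\!\left(\tfrac{3q_r-1}{2q_r}\right).$$
Also $H[M\mid R]=\mathbb{E}\,h(q_R)$ and $H[M]=h(\mathbb{E}\,q_R)$. So the target quantity equals $h(\mathbb{E} q_R)-\mathbb{E}\, g(q_R)$, where
$$g(q)=h(q)-q\,h\!\left(\tfrac{3q-1}{2q}\right),\qquad q\in[\tfrac13,1].$$
It therefore suffices to show that for every probability distribution of $q$ on $[\frac13,1]$ we have $h(\mathbb{E}q)-\mathbb{E}g(q)\le\log_2\frac32$.

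To handle this, we use a supporting line. For a fixed value $c=\mathbb{E}q$, concavity of $h$ gives $h(t)\le h(c)+h'(c)(t-c)$. Thus it is enough to find, for each $c$, that
$$g(q)\ \ge\ h(c)+h'(c)(q-c)-\log_2\tfrac32\qquad\text{for all } q\in[\tfrac13,1].$$
Taking expectations then closes the argument. Equivalently, we must show
$$\min_{q\in[1/3,1]}\bigl(g(q)-h'(c)(q-c)\bigr)\ \ge\ h(c)-\log_2\tfrac32 \quad\text{for all } c.$$
This is a two-parameter inequality in elementary functions. We would simplify it by expanding $q\,h(\frac{3q-1}{2q})$ as $-(3q-1)/2\cdot\log_2\frac{3q-1}{2q}-\frac{1-q}{2}\log_2\frac{1-q}{2q}$, so that $g$ becomes an explicit sum of terms of the form $x\log x$. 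The inner minimization in $q$ can then be solved by setting the derivative to zero, which yields an algebraic relation between $q$ and $2^{h'(c)}=\frac{1-c}{c}$.

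The main obstacle is verifying this final inequality, in particular locating the extremal configuration where equality $\log_2\frac32$ is attained. Checking the endpoints $q=\frac13$, where $g=\log_2 3-\frac23$, and $q=1$, where $g=0$, as well as the constant-$q$ case, suggests the extremum is an interior point. We would first try to guess it from the tightness of Tolhuizen's construction (where $|\mathcal A|\approx 1.5^n$), solve the stationarity equations there explicitly, and then verify the inequality globally. To do this we would use the substitution $s=\frac{1-c}{c}$, which turns the derivative conditions into polynomial equations. If a clean closed form fails, a fallback is to prove the inequality by splitting $[\frac13,1]$ into a few subintervals and bounding $g$ from below by convexity-type estimates on each piece.
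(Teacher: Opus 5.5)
Your reduction of the target to $h(\mathbb{E}q)-\mathbb{E}\,g(q)$, with $g(q)=h(q)-q\,h\bigl(\frac{3q-1}{2q}\bigr)$, is correct; it is the paper's computation under $q_r=1-\frac{2a_r}{3}$. The gap is the sufficient condition you then set out to verify: it is false.

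Taking the tangent of $h$ at $c=\mathbb{E}q$ is an identity on the $h$ side. So all the burden falls on $g(q)\ge h(c)+h'(c)(q-c)-\log_2\frac32$ for all $q\in[\frac13,1]$, which you require for every $c$. Direct computation gives
\[
h(c)+h'(c)(1-c)=-\log_2 c,\qquad h(c)+h'(c)\bigl(\tfrac12-c\bigr)=-\tfrac12\log_2\bigl(c(1-c)\bigr),
\]
while $g(1)=0$ and $g(\frac12)=\frac12$. The condition at $q=1$ therefore forces $c\ge\frac23$. The condition at $q=\frac12$ forces $c(1-c)\ge\frac29$, i.e.\ $c\le\frac23$. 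Hence your two-parameter inequality fails for every $c\ne\frac23$, and neither the stationarity analysis nor the interval-splitting fallback can establish it.

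This matches the equality case of the lemma, which your proposal does not identify. Equality holds when $q\in\{\frac12,1\}$ with probabilities $\frac23,\frac13$, so $\mathbb{E}q=\frac23$. The chord of $g$ from $q=\frac12$ to $q=1$ has slope $-1=h'(\frac23)$.

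The missing idea is to decouple the tangent point from the distribution. Concavity gives $h(\mathbb{E}q)\le h(c_0)+h'(c_0)(\mathbb{E}q-c_0)$ for any fixed $c_0$. With $c_0=\frac23$ this reads $h(\mathbb{E}q)\le\log_2 3-\mathbb{E}q$, which is exactly the paper's bound $h(y)-y\le\log_2\frac32$ written in $y=\mathbb{P}[M=0]$.

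The lemma then reduces to the one-variable inequality $g(q)\ge 1-q$ on $[\frac13,1]$, with equality at $q=\frac12$ and $q=1$. Indeed, it gives $h(\mathbb{E}q)-\mathbb{E}g(q)\le\log_2 3-\mathbb{E}q-(1-\mathbb{E}q)=\log_2\frac32$.

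Under $x=\frac32(1-q)$, this is precisely the paper's inequality $h(x)+x\log_2 3-2h(\frac{2x}{3})\le 0$ on $[0,1]$. The paper proves it by noting that the second derivative changes sign at $x=\frac12$. It then uses convexity with the endpoint values on $[0,\frac12]$, and concavity on $[\frac12,1]$ with both the function and its derivative vanishing at $x=\frac34$.

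Your proposal leaves this crux unproved as well. As written, it is a reduction to a false statement plus an unexecuted plan for verifying it.
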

\begin{proof}
For each $r$ such that $\mathbb{P}[R=r]=p_r>0$, we set $a_r=\mathbb{P}[U=0|R=r]$. Then 
$$\mathbb{P}[M=0|R=r]=\mathbb{P}[M=0|U=0] \cdot a_r = \frac{2a_r}{3}.$$
We also have $\mathbb{P}[M=0]=\sum_r \frac{2a_r p_r}{3}$. So 
$$H[M|R]=\sum_r p_r h\left(\frac{2a_r}{3}\right),~~~H[M]=h\left(\sum_r\frac{2a_rp_r}{3}\right).$$ 
To compute $H[U|R, M]$, chain rule gives
\begin{align*}
    H[U|R,M]&=H[U,R,M]-H[R,M]\\
            &=H[U,R,M]-H[U,R]+H[U,R]-H[M,R]\\
            &=H[M|U,R]+H[U|R]-H[M|R].
\end{align*}
Recall that $\mathbb{P}[R=r]=p_r$ and $\mathbb{P}[U=0|R=r]=a_r$, by definition
$$
    H[M|U,R]=\sum_r p_r a_r h\left(\frac{1}{3}\right),~~~H[U|R]=\sum_r p_r h(a_r).
$$
Hence $$H[U|R,M]=\sum_r p_r \left(h(a_r)+a_rh\left(\frac{1}{3}\right)-h\left(\frac{2a_r}{3}\right)\right).$$
To prove the desired inequality, it suffices to show that
$$h\left(\sum_r\frac{2a_rp_r}{3}\right)+\sum_r p_r \left(h(a_r)+a_rh\left(\frac{1}{3}\right)-2h\left(\frac{2a_r}{3}\right)\right) \le \log_2 \frac{3}{2}.$$
We first show that for any $x \in [0,1]$, $g(x):=h(x)+xh(1/3)-2h(\frac{2x}{3})+2x/3 \le 0$. It is not hard to see that $g(x)=h(x)+(\log_2 3) x-2h(\frac{2x}{3})$. Using
$$h'(x)=\log_2\frac{1-x}{x}, \qquad h''(x)=-\frac{1}{(\ln2)x(1-x)},$$
we obtain $g''(x)=\frac{1-2x}{(\ln2)x(1-x)(3-2x)}.$
Thus $g$ is convex on $[0,\frac{1}{2}]$ and concave on $[1/2,1]$. On $[0,1/2]$, we have $g(0)=0$ and $g\left(\frac12\right)
=\frac73-\frac32\log_2 3<0$,
where the last inequality follows from $3^9=19683 \ge 16384=2^{14}$. By convexity, for $0\le x\le  \frac{1}{2}$,
$$g(x)\le (1-2x)g(0)+2x g\left(\frac12\right)\le0.$$
On $[\frac{1}{2},1]$, direct calculations give $g\left(\frac34\right)=0$
and
\begin{align*}
g'\left(\frac34\right)=\log_2\frac{1/4}{3/4}+\log_2 3
-\frac43\log_2\frac{3-2 \cdot 3/4}{2 \cdot 3/4}=0.
\end{align*}
Since $g$ is concave on this interval, we have for $x \in [1/2, 1]$, $g(x) \le g(3/4) =0.$

Applying this inequality to $a=a_r$ and averaging with weight $p_r$, we have 
\begin{align} \label{ineq1}
\sum_r p_r\left(h(a_r)+a_rh\left(\frac{1}{3}\right)-2h\left(\frac{2a_r}{3}\right)\right) \le -\sum_r \frac{2a_rp_r}{3}.
\end{align}

Finally for $x \in [0,1]$, if we let $f(x)=h(x)-x$, then $f'(x)=\log_2 \frac{1-x}{x}-1$. So $f'(x)=0$ iff $x=1/3$, that gives $f(1/3)=h(1/3)-1/3=\log_2 \frac{3}{2}$ which is greater than $f(0)$ and $f(1)$. Therefore $f(x) \le \log_2 \frac{3}{2}.$ This gives
\begin{align}\label{ineq2}
h\left(\sum_r\frac{2a_rp_r}{3}\right) - \sum_r\frac{2a_rp_r}{3} \le \log_2 \frac{3}{2}.
\end{align}
Adding \eqref{ineq1} and \eqref{ineq2} completes the proof of this lemma.
\end{proof}

Now we are ready to prove our main result.

\begin{proof}[Proof of Theorem \ref{thm_main}]
We apply Sinkhorn's Theorem to the matrix $K$ whose rows and columns correspond to the subsets from $\mathcal{A}$ and $\mathcal{B}$ respectively, and $K_{A, B}=3^{-|A \cup B|}$. Suppose the positive weights from Sinkhorn's Theorem are $\{r_A\}_{A \in \mathcal{A}}$ and $\{s_B\}_{B \in \mathcal{B}}$ respectively. 

This naturally defines random variables $X, Y$, such that 
$$\mathbb{P}[X=A, Y=B]=r_A s_B \cdot 3^{-|A \cup B|}.$$
$$\mathbb{P}[X=A]=\frac{1}{|\mathcal{A}|},~~~~\mathbb{P}[Y=B]=\frac{1}{|\mathcal{B}|}.$$
In other words, we modify the joint distribution of $(X, Y)$ such that $X$ is uniform on $\mathcal{A}$, $Y$ is uniform on $\mathcal{B}$, yet they are not necessarily independent. From the uniformity, we have 
$$H[X]=\log_2 |\mathcal{A}|, \qquad H[Y]=\log_2|\mathcal{B}|.$$

Let $U=X \cup Y$. We further introduce a random variable $M$ as follows: $M$ always contains $U=X  \cup Y$ as a subset, and for each element $i \in [n] \setminus U$, we independently add it to $M$ with probability $\frac{1}{3}$. Therefore, for $C \supset A \cup B$,
$$\mathbb{P}[M=C|X=A, Y=B]=\left(\frac{1}{3}\right)^{|C|-|A \cup B|}\left(\frac{2}{3}\right)^{n-|C|}.$$
In other words,
$$\mathbb{P}[M=C, X=A, Y=B]=r_A s_B \cdot 3^{-n} \cdot 2^{n-|C|}\cdot 1_{A \cup B \subset C}.$$
It is not hard to see that $1_{A \cup B \subset C}=1_{A \subset C} \cdot 1 _{B \subset C}$, so the right hand side factors into the product of a function only in $A$ and another function only in $B$. Therefore conditioning on $M=C$ when $\mathbb{P}[M=C]>0$, we have that the events $\{X=A\}$ and $\{Y=B\}$ are independent. As a consequence, the conditional entropies satisfy $H[X|M]+H[Y|M]=H[X,Y|M]$, expanded as
\begin{align}\label{eq1}
H[X,M]+H[Y,M]=H[X,Y,M]+H[M].
\end{align}

Our goal is to prove the following two inequalities:
$$H[M]+H[Y|X,M]-H[M|X] \le n \log_2 \frac{3}{2}, \qquad H[M]+H[X|Y,M]-H[M|Y] \le n \log_2 \frac{3}{2}.$$
Assuming that both are true, their sum gives
\begin{align*}
2n \log_2 \frac{3}{2} &\ge 2H[M]+2H[X,Y,M]-H[X,M]-H[Y,M]-H[X,M]-H[Y,M]+H[X]+H[Y]\\
&=H[X]+H[Y] = \log_2 |\mathcal{A}|+\log_2|\mathcal{B}|.
\end{align*}
The first equality follow from \eqref{eq1}. The last equality comes from that $X$ and $Y$ are uniform. This would immediately give $|\mathcal{A}||\mathcal{B}| \le \left(\frac{9}{4}\right)^n,$ completing the proof. 

Since $(\mathcal{A}, \mathcal{B})$ is a cancellative pair, knowing $U=X \cup Y$ and $Y$ immediately gives $X$, knowing $U$ and $X$ immediately gives $Y$. And obviously knowing both $X$ and $Y$ determines $U$. Therefore $$H[X,Y,M]=H[X,Y,U,M]=H[U,X,M]=H[U,Y,M].$$
Therefore $H[Y|X,M]=H[U|X,M]$ and $H[X|Y,M]=H[U|Y,M]$. It suffices to prove the following two inequalities:
$$H[M]+H[U|X,M]-H[M|X] \le n \log_2 \frac{3}{2}, \qquad H[M]+H[U|Y,M]-H[M|Y] \le n \log_2 \frac{3}{2}.$$

Now we prove the first inequality without using the cancellative property. Then by symmetry the second inequality follows immediately from replacing $X$ by $Y$. We first break down the left-hand side coordinate-wise. We view $M$ as a random binary vector (the indicating vector of $M$ as a subset) in $\{0, 1\}^n$. Let $M=(M_1, \cdots, M_n)$. Similarly write $U=(U_1, \cdots, U_n)$ Then 
\begin{align*}
H[U|X, M]&=\sum_{i=1}^n H[U_i|U_1, \cdots, U_{i-1}, X, M] \le \sum_{i=1}^n H[U_i|X, M_1, \cdots, M_{i-1}, M_i].
\end{align*}
The last inequality is due to the fact that dropping conditioning on $U_1 \cdots, U_{i-1}, M_{i+1}, \cdots, M_n$ would not decrease the entropy. We also use the sub-additivity and chain rule to get
$$H[M] \le \sum_{i=1}^n H[M_i].$$
$$H[M|X]=\sum_{i=1}^n H[M_i|X, M_1, \cdots, M_{i-1}].$$
It suffices to prove for every $i=1, \cdots, n$,
$$H[M_i]-H[M_i|X, M_1, \cdots, M_{i-1}]+H[U_i|X, M_1, \cdots, M_{i-1}, M_i] \le \log_2 \frac{3}{2}.$$
Let $R=(X, M_1, \cdots M_{i-1})$. The inequality becomes
$$H[M_i]-H[M_i|R]+H[U_i|R, M_i] \le \log_2 \frac{3}{2}.$$
Now note that $M_i$ and $U_i$ are both $1$-dimensional binary variables. By our definition of $M$ from $U=X \cup Y$, clearly we have $R\to U_i\to M_i$ forming a Markov chain, and
$$\mathbb{P}[M_i=1|U_i=1]=1,~~~~\mathbb{P}[M_i=1|U_i=0]=\frac{1}{3}.$$
Therefore $M_i, U_i, R$ satisfy the assumptions of Lemma \ref{lem_key}. This completes the proof.
\end{proof}

\section{Tolhuizen's construction}
Here let us briefly describe Tolhuizen's elegant lower bound construction \cite{Tol2000} that achieves $|\mathcal{A}||\mathcal{B}|=(2.25-o(1))^n$. Indeed he constructed a symmetric pair which means a family $|\mathcal{A}|=(1.5-o(1))^n$ such that for $A, B \in \mathcal{A}$, knowing $A \cup B$ and $A$ uniquely determines $B$. For ease of our readers, we use the linear algebra language instead of the original coding theory language. We will choose a $k \times n$ matrix $M$ over $\mathbb{F}_2$ with $k=n/3$. For $|S|=k$, denote by $M_S$ the $k \times k$ submatrix of $M$ consisting of the columns indexed by $S$. We denote by $\sigma(S)$ the sum of columns of $M_S$, which is a vector in $\mathbb{F}_2^k$. For a fixed vector $w \in \mathbb{F}_2^k$, let 
$$\mathcal{F}_{M,w}=\left\{S \in \binom{[n]}{k}: M_S\textup{~is~invertible~and~}\sigma(S)=w\right\}.$$
Tolhuizen's symmetric pair is $\mathcal{A}=\mathcal{B}=\mathcal{F}_{M,w}$ for a carefully chosen matrix $M$ and and a most popular vector $w$. 

To see that this is a cancellative pair, suppose $A \cup B=A \cup C$ for $A,B,C \in \mathcal{F}_{M,w}$. Then $B \Delta C \subset A$. By the definition of $\mathcal{F}_{M,w}$, $\sigma(B)=\sigma(C)$ means the sum of columns in $B \Delta C$ is the zero vector. However $M_A$ is invertible, so the only possibility is $B \Delta C=\emptyset$, meaning $B=C$.

To maximize $|\mathcal{F}_{M,w}|$, first choose entries of  $M$ independently $0$ or $1$ each with probability $1/2$. Then for a fixed $|S|=k$, 
$$\mathbb{P}[M_S\textup{~is~invertible}]=\prod_{j=1}^k (1-2^{-j}) \rightarrow \gamma \approx 0.288788~~~~\textup{when~}k \rightarrow \infty.$$
By linearity of expectation, there exists $M$ such that the number of $k$-subsets $S$ such that $M_S$ is invertible is at least $\gamma \binom{n}{k}$. Then by pigeonhole principle, there exists $w \in \mathbb{F}_2^k$ such that the resulting $\mathcal{F}_{M,w}$ has size at least $\gamma 2^{-k} \binom{n}{k}$. Taking $k=n/3$ optimizes this quantity and gives 
$$|\mathcal{A}|=|\mathcal{B}|=|\mathcal{F}_{M,w}|\ge \gamma 2^{-n/3}\binom{n}{n/3}= (1+o(1))\gamma \cdot \frac{3}{2\sqrt{\pi n}}\cdot 1.5^n=(1.5+o(1))^n.$$

A very natural question is: why cannot one carefully choose an explicit $k \times n$ matrix $M$ such that there is still a constant proportion of $|S|=k$ with $M_S$ being invertible, and among these $S$, the column sum $\sigma(S)$ distributes rather unevenly? If this could happen, then one may choose the most popular $w$ that would give a large $|\mathcal{F}_{M,w}|$.

From our Theorem \ref{thm_main} that shows $|\mathcal{A}||\mathcal{B}| \le 2.25^n$ for all cancellative pairs, or the result of Frankl and F\"uredi that shows for symmetric cancellative pair $\mathcal{A}=\mathcal{B}$, $|\mathcal{A}| \le n \cdot 1.5^n$, we immediately know that this is not possible. The following result describes an anti-concentration phenomenon. Roughly speaking, it says that if most of the $k \times k$ submatrices of $M$ are invertible, then the sum of their column vectors has no atom larger than a subexponential factor times the uniform mass.

\begin{corollary}
For every fixed $k \times n$ matrix $M$ over $\mathbb{F}_2$ with $k=n/3$, let $S$ be a random $k$-subset of $[n]$, and $c=\mathbb{P}[M_S\textup{~is~invertible}]$. Then for every $w \in \mathbb{F}_2^k$ we have 
$$\mathbb{P}[\sigma(S)=w| M_S\textup{~is~invertible}] \le \left(\frac{2\sqrt{\pi n}}{3c}+o(1)\right)\cdot 2^{-k}.$$
\end{corollary}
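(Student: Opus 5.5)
The plan is to use $\mathcal{F}_{M,w}$ itself as a symmetric cancellative pair and apply Theorem \ref{thm_main}. As shown in the discussion of Tolhuizen's construction, for any matrix $M$ and any vector $w$ the pair $\mathcal{A}=\mathcal{B}=\mathcal{F}_{M,w}$ is cancellative. That argument did not use anything special about $M$: if $A\cup B=A\cup C$ with $A,B,C\in\mathcal{F}_{M,w}$, then $B\Delta C\subset A$, and $\sigma(B)=\sigma(C)$ says the columns indexed by $B\Delta C$ sum to zero. Invertibility of $M_A$ then forces $B\Delta C=\emptyset$. Theorem \ref{thm_main} therefore gives $|\mathcal{F}_{M,w}|^2\le 2.25^n$, that is, $|\mathcal{F}_{M,w}|\le 1.5^n$ for every fixed $M$ and $w$.

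Next I rewrite the conditional probability in terms of $|\mathcal{F}_{M,w}|$. The number of $k$-subsets $S$ with $M_S$ invertible is $c\binom{n}{k}$, and $\mathcal{F}_{M,w}$ is exactly the set of those $S$ with $\sigma(S)=w$. Hence
$$\mathbb{P}[\sigma(S)=w\mid M_S \textup{ is invertible}]=\frac{|\mathcal{F}_{M,w}|}{c\binom{n}{k}}\le \frac{1.5^n}{c\binom{n}{n/3}}.$$
If $c=0$ the statement is vacuous or undefined, so I assume $c>0$.

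The last step is a Stirling estimate. I use the asymptotic already quoted in the paper,
$$2^{-n/3}\binom{n}{n/3}=(1+o(1))\frac{3}{2\sqrt{\pi n}}\,1.5^n.$$
Since $k=n/3$, this gives $1.5^n/\binom{n}{n/3}=(1+o(1))\frac{2\sqrt{\pi n}}{3}\,2^{-k}$. Substituting into the previous display yields the bound $\bigl(\frac{2\sqrt{\pi n}}{3c}\bigr)(1+o(1))\,2^{-k}$.

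I expect the only delicate point to be the form of the error term. The statement writes the bound as $\frac{2\sqrt{\pi n}}{3c}+o(1)$, an additive $o(1)$, whereas my derivation gives a multiplicative $(1+o(1))$ factor. The two agree only when $\sqrt{n}/c$ is bounded, so this must be addressed rather than ignored. I would handle it by making Stirling's formula explicit, with error factor $1+O(1/n)$. The discrepancy is then an additive term of order $n^{-1/2}/c$, which is $o(1)$ whenever $c$ is bounded below. I read that as the intended regime, since the corollary concerns the case where a constant proportion of the submatrices are invertible. Everything else is bookkeeping.
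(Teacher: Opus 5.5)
Your proposal is correct and is essentially the paper's proof. The paper also shows $pc\binom{n}{k}=|\mathcal{F}_{M,w}|\le 1.5^n$ by applying Theorem \ref{thm_main} to the symmetric cancellative pair $(\mathcal{F}_{M,w},\mathcal{F}_{M,w})$, and then uses Stirling's formula.

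Your remark on additive versus multiplicative $o(1)$ is a fair refinement that the paper glosses over. One correction: since $c\le 1$, the quantity $\sqrt{n}/c$ is never bounded, so that cannot be the condition. What your explicit $1+O(1/n)$ Stirling error requires is $c\sqrt{n}\to\infty$, and this holds in particular when $c$ is bounded below.
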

\begin{proof}
Let the probability on the left hand side be $p$, we know that $pc\binom{n}{k}=|\mathcal{F}_{M, w}| \le 1.5^n$. Estimating the binomial coefficient using Stirling's formula finishes the proof.
\end{proof}

\section{Concluding Remarks}
In this paper we have found the optimal growth rate of cancellative pairs of famillies. For the symmetric case, let $G(n)$ be the maximum of $|\mathcal{A}|$ for $\mathcal{A} \subset 2^{[n]}$ and $(\mathcal{A}, \mathcal{A})$ cancellative. We know $G(n) \le 1.5^n$. It would be interesting to know if the maximum is always attained by a linear algebraic construction. Computer search suggests that this is always the case for $n \le 8$. 
\begin{question}
Is it true that $G(n)$ is always attained by $|\mathcal{F}_{M, w}|$, for some carefully chosen $k \times n$ matrix $M$ over $\mathbb{F}_2$ and vector $w \in \mathbb{F}_2^k$?
\end{question}

Interestingly, computer search shows that for $n=4$, the optimal construction for non-symmetric cancellative pairs is $\mathcal{A}=\{\{1\},\{2\},\{3\}\}$ and $\mathcal{B}=\{\{1\},\{2\},\{3\},\{1,4\},\{2,4\},\{3,4\}\}$, which gives $|\mathcal{A}||\mathcal{B}|=18$. They can be view as $\mathcal{A}=\mathcal{F}_{M_1,w_1}$ and $\mathcal{B}=\mathcal{F}_{M_2,w_2}$ for 
$$M_1=\begin{bmatrix}
1 & 1 & 1 & 0\\
0 & 0 & 0 & 1
\end{bmatrix},~~~w_1=\begin{bmatrix}1\\0\end{bmatrix},~~~M_2=\begin{bmatrix}1 & 1& 1 & 0\end{bmatrix},~~~w_2=\begin{bmatrix}1\end{bmatrix}.$$

For recovering pairs of families, it is expected that the following beautiful conjecture \cite{AS94} holds.
\begin{conjecture}[The Sandglass Conjecture]
If $(\mathcal{A}, \mathcal{B})$ is a recovering pair of families of subsets of $[n]$, then 
$$|\mathcal{A}||\mathcal{B}| \le 2^n.$$
\end{conjecture}
An equivalent form of the Sandglass Conjecture says if $(\mathcal{A}, \mathcal{B})$ is such that $A \cup B$ for $A \in \mathcal{A}, B \in \mathcal{B}$ uniquely determines $A$, $A \cap B$ uniquely determines $B$, then $|\mathcal{A}||\mathcal{B}| \le 2^n$. In our proof of Theorem \ref{thm_main}, we may introduce an additional random variable $V=X \cap Y$. Then $U$ determines $X$ and $V$ determines $Y$, and clearly $X, Y$ together determines $V$. This clearly gives new constraints $H[\cdots, U]=H[\cdots, U, X]$, $H[\cdots, V]=H[\cdots, V, Y]$, and $H[\cdots, X,Y]=H[\cdots, X, Y, V]$, etc. One can also define a new random variable $N$ which is a subset of $X \cap Y$ by keeping its elements with probability $2/3$. Then $X$ and $Y$ are again conditionally independent given $N$. We also have some flexibility to change the value of $K_{A,B}$ to $\lambda^{|A \cup B|}$ for some $\lambda \in (0,1)$, then the corresponding probability for $M$ and $N$ are $\lambda$ and $1-\lambda$ respectively. In our proof, we take $\lambda=1/3$ to optimize the bound. However, other choices of $\lambda$ may allow for additional entropy equalities and inequalities beyond those of Shearer type. It would be interesting to investigate further if our probabilistic technique can be extended to fully settle the Sandglass Conjecture.\\

\noindent {\bf Acknowledgment.} ChatGPT 5.6 was used to prove Lemma \ref{lem_key} and to explore and discover entropy inequalities in Theorem \ref{thm_main}, as well as proofreading and language polishing. All AI-assisted outputs were independently checked, verified, and simplified by the authors, who assume full responsibility for the content of this work. The authors thank Fan Chang for pointing out an issue with the definitions of \(\mu_{\mathrm{can}}\) and \(\mu_{\mathrm{rec}}\) in an earlier version of this paper.

\end{document}